\documentclass[a4paper]{ifacconf}

\usepackage{graphicx,amsmath,url}      
\usepackage[round]{natbib}             

\usepackage{amssymb}
\usepackage{color}

\newtheorem{dfn}{Definition}
\newtheorem{rmk}{Remark}

\DeclareMathOperator{\diag}{diag}
\DeclareMathOperator{\He}{He}
\newcommand{\bbr}{\mbox{$\mathbb{R}$}}
\newcommand{\bbs}{\mbox{$\mathbb{S}$}}
\newcommand{\bbd}{\mbox{$\mathbb{D}$}}
\newcommand{\mcl}{\mbox{$\mathcal{L}$}}
\newcommand{\mch}{\mbox{$\mathcal{H}$}}
\newcommand{\mcc}{\mbox{$\mathcal{C}$}}
\newcommand{\mcw}{\mbox{$\mathcal{W}$}}
\newcommand{\mcr}{\mbox{$\mathcal{R}$}}

\newcommand{\mcu}{\mbox{$\mathcal{U}$}}

\ifx\portugues\undefined
\def\portugues{0}
\fi

\if\portugues0
   \usepackage[english]{babel}
  \else
   \usepackage[spanish,brazil,english]{babel}
\fi

\usepackage[T1]{fontenc}

\usepackage[utf8]{inputenc}

\usepackage{ae}

\begin{document}


%

\begin{frontmatter}

\title{H$_1$-ISS Analysis and Boundary Control Design for Coupled Linear ODEs and Hyperbolic PDEs\thanksref{footnoteinfo}}

\thanks[footnoteinfo]{This work was partially supported by CAPES under grants 88887.629803/2021-00  and 88881.878833/2023-01 (SticAmSud), and CNPq under grant 303289/2022-8/PQ.}

\author[First]{F. V. Carvalho}
\author[Second]{D. Coutinho}
\author[Second]{G. de Andrade}
\author[Third]{C. Prieur}
\author[Third]{M. Maghenem}

\address[First]{Post-graduate Program in Automation and Systems Engineering, Universidade Federal de Santa Catarina, Florianópolis, SC, 88040-900, Brazil. (e-mail: eng.franthiescolly@gmail.com)}
\address[Second]{Department of Automation and Systems, Universidade Federal de Santa Catarina, Florianópolis, SC, 88040-900, Brazil. \\ (e-mails:  daniel.coutinho(gustavo.artur)@ufsc.br)}
\address[Third]{Université Grenoble Alpes, CNRS, Grenoble-INP, GIPSA-lab, F-38000, Grenoble, France. \\ (e-mails: christophe.prieur(mohamed.maghenem)@gipsa-lab.fr)}

\renewcommand{\abstractname}{{\bf Abstract:~}}

\begin{abstract}
This paper considers the general problem of the design of boundary controllers for distributed parameter systems. The control objectives are the asymptotic stability of the closed-loop system, as well as the disturbance attenuation of exogenous disturbances affecting the measurements and boundary conditions.  More specifically, this work studies input-to-state stability (ISS) and stabilization for systems governed by coupled linear ordinary differential equations (ODEs) and homogeneous linear hyperbolic partial differential equations (PDEs), where actuation, sensing, and disturbance inputs are located at the system boundaries. First, we extend a previously established Lyapunov-based ISS condition for a similar class of systems to the H$_1$ setting under mild assumptions on the disturbance inputs. This extension not only ensures that the H$_1$-norm of the system states driven by non-vanishing disturbances and compatible initial conditions are bounded, but also provides an upper bound on the system H$_1$-norm. Subsequently, these theoretical results are applied to derive stability analysis and  control design conditions expressed in terms of linear matrix inequalities. Numerical examples are provided to illustrate the effectiveness and potential of the proposed approach.
\end{abstract}

\begin{keyword}
Boundary stabilization, coupled ODE-PDE systems, input-to-state stability, LMIs.
\end{keyword}

\end{frontmatter}

\section{Introduction}

Many practical systems with complex dynamics are modeled using partial differential equations (PDEs). The stability analysis
and stabilization techniques for PDE-governed systems constitute a well-established area of research. A comprehensive discussions can be found in various textbooks as,
for instance, \cite{krstic08,bastin16,karafyllis18}.
More recently, significant attention has been directed towards developing stability and stabilization techniques that provide performance guarantees, particularly in the presence of (unknown but bounded) exogenous disturbances. In this context, most of the available results are predominantly addressed within the framework of input-to-state stability (ISS); see, e.g., \citep{mironchenko20}. From a practical perspective, actuators and sensors are typically positioned at the system boundaries. However, exogenous disturbances can affect the entire domain, the boundaries, or both, introducing significant challenge to the stability analysis, performance evaluation, and control synthesis. For instance, \cite{hasan14} proposes a state-feedback and state-observer for disturbance attenuation problems for disturbance attenuation in coupled linear hyperbolic PDEs, while \cite{tanwani16} addressed the input-to-state stabilization problem for linear hyperbolic systems 
using quantized boundary control. Additionally, \cite{wu22} investigated exponential stabilization of a wave equation subject to exogenous boundary disturbances, and \cite{Wang24} introduced an adaptive output feedback boundary control strategy for coupled hyperbolic PDEs with disturbances governed by an ordinary differential equation (ODE). Furthermore, \cite{suha22} provided a convex-optimization-based approach for designing controllers that minimize the effect of in-domain disturbances in hyperbolic systems. The latter works collectively provide diverse methodologies and shed light on challenges in addressing disturbance-rejection problems for infinite-dimensional systems \citep{deAndrade2024}. 

Recently, \cite{coutinho:2024} addressed the input-to-state stability and stabilization problems for coupled ODE-PDE systems
in the L$_2$-sense, where the disturbances are constrained to the class of continuously differentiable signals with bounded magnitude. In addition, only an upper bound on the average  value of the PDE states is provided. In this paper, we extend the previous results by considering stability and stabilization problems, for systems governed by coupled ODEs and hyperbolic PDEs, under the influence of magnitude-bounded disturbances 
affecting the system's boundaries. Our study is carried out within the H$_1$-framework, providing a more general approach to the stability analysis and boundary-control design under less restrictive assumptions on input perturbations. Furthermore, the proposed results are established in terms of linear matrix inequality (LMI) constraints providing a systematic procedure for stability analysis, performance verification and control design as illustrated by numerical examples.

The remainder of this paper is organized as follows. The problem of concern is described in Section~\ref{sec:poi}, while Section~\ref{sec:H1-ISS} introduces the notion of stability to be considered in this paper. In addition, Sections~\ref{sec:H1-sa} and~\ref{sec:H1-cd} develop input-to-state stability and stabilization conditions in the H$_1$-sense using Lyapunov-based arguments, respectively. The simulation results are shown in Section~\ref{sec:ie}, and Section~\ref{sec:cr} ends the paper with some concluding remarks.



\subsection*{Notation} \vspace{-2mm}
The notation used in this paper is standard. $\bbr$, $\bbr_+$, $\bbr^n$, and $\bbr^{n \times m}$ represent the set of real numbers, non-negative real numbers, $n$ dimensional real vectors, and $n \times m$ real matrices, respectively. $\bbs^n$ and $\bbd^n$ are the space of $n \times n$ symmetric and diagonal real matrices, respectively. An $n \times n$ identity matrix and an $n \times m$ matrix of zeros are denoted by $I_{n}$ and $0_{m \times n}$, respectively, with their dimensions being often omitted when they can be inferred from the context. For $W \in \bbr^{n \times n}$, $W^\top$ is its transpose, $\He\{W\} := W + W^\top$,  and the symbol $\star$ (for a block symmetric matrix)  represents a block matrix deduced by symmetry. $\mcc_{0}^n(Y)$ and $\mcc_{1}^n(Y)$ stand for the set of continuous and continuously differentiable $n$-dimensional vector functions, respectively, with $Y$ being a given normed space. $\mcl_2^{n}$ is the Hilbert space of square integrable ($n$-dimensional) vector functions over $[0,1]$, i.e., if $z \in \mcl_2^n$ then 
\begin{equation*}
\|z\|_{\mathcal{L}_2} = \left(\int_0^1 z(x)^T z(x) dx \right)^{1/2}  < \infty    .
\end{equation*}
For a vector signal $s : \bbr_+ \rightarrow \bbr^{n}$, its infinite norm is defined as
\begin{equation*}
\|s\|_\infty : = \left( \sup_{t\geq0}  s(t)^T s(t) \right)^{1/2} .    
\end{equation*}

\section{Problem Formulation} \label{sec:poi}

Consider the following class of linear PDE/ODE systems
\begin{align}
\partial_t \xi(t,z) & = - \Lambda \partial_z \xi(t,z) \label{eqn:pde-1} \\
\xi(t,0) & = H \xi(t,1) + B_u u(t) + B_v v(t) \label{eqn:pde-2} \\
\dot v(t) & = M v(t) + B_w w(t) \label{eqn:pde-3}  \\
y(t)  & = \xi(t,1)+ D_v v(t) , \label{eqn:pde-4} 
\end{align}
coupled with the following ODE
\begin{align}
&   u(t) = K_1 \eta(t) + K_2 y(t) \label{eqn:ode-1} \\
&   \dot \eta(t) = R \eta(t) + S y(t) , \label{eqn:ode-2}
\end{align}
where $t \in \bbr_+$, $z \in [0,1]$, $\xi(\cdot) = \begin{bmatrix} \xi_1(\cdot) & \cdots & \xi_n(\cdot) \end{bmatrix}^\top$, with $\xi_i \in \mcl_2, \ i=1,\ldots,n$, is the PDE state, $w \in \bbr^{n_w}$ is the exogenous disturbance input, $v \in \bbr^{n_v}$ is the disturbance filter state, $\eta \in \bbr^{n_\eta}$ is the state of the ODE defined in \eqref{eqn:ode-1} and \eqref{eqn:ode-2}, typically a dynamic boundary controller, and $y \in \bbr^n$ and $u \in \bbr^{n_u}$ are the signals coupling \eqref{eqn:pde-1}-\eqref{eqn:pde-4} with \eqref{eqn:ode-1}-\eqref{eqn:ode-2}. In addition, $\Lambda,H,B_u,B_v,M,B_w,D_v,K_1$, $K_2$, $R$ and $S$ are real matrices with appropriate dimensions, with $\Lambda$ being diagonal positive definite and $M$ being Hurwitz.  

The initial conditions of \eqref{eqn:pde-1}-\eqref{eqn:pde-4} and \eqref{eqn:ode-1}-\eqref{eqn:ode-2} are given by
\begin{equation} \label{eqn:ics}
 \xi(0,z) = \xi_0(z), \ z \in [0,1], \quad 
 v(0) = v_0 , \quad \eta(0) = \eta_0 
\end{equation}
with $\xi_0(z) \in \mcc_1^n([0,1])$, $v_0 \in \bbr^{n_v}$ and $\eta_0 \in \bbr^{n_\eta}$
satisfying the first-order compatibility conditions~\citep{bastin16}. 

Additionally, we consider the system defined in \eqref{eqn:pde-1}-\eqref{eqn:ics} equipped with the norm:
\begin{equation}
    \|(\xi, \partial_z\xi,v,\eta)\|_{X} : = \| \xi\|_{\mathcal{L}_2} 
    + \| \partial_z \xi\|_{\mathcal{L}_2} +
    \| v \|_2 + \| \eta \|_2 ,
\end{equation}
for every $(\xi,\partial_z \xi, v,\eta) \in X: = \mcl_2^n \times \mcl_2^n \times \bbr^{n_v} \times \bbr^{n_\eta}$. As an abuse of notation, we often refer to $\|(\xi,v,\eta)\|_{X}$ as the system's H$_1$-norm.

\begin{rmk}
The assumption that $\Lambda$ is positive definite is made without loss of generality. Indeed, we can always cast into this case through a change of variables; see 
\citep{caldeira18}.    
\end{rmk}


\begin{rmk}
The advantage of introducing the disturbance filter \eqref{eqn:pde-3} is that the boundary term inherits additional regularity in time. Indeed, since the exogenous signal $w$ does not directly enter the boundary equation in \eqref{eqn:pde-2} and it is first processed by the ODE in \eqref{eqn:pde-3}, one has that $v \in H^1(0,\infty)$ whenever $w \in L^2(0,\infty)$, and, in particular, $v$ is continuous on every compact time interval\footnote{Here, $L^2(0,\infty)$ and $H^1(0,\infty)$ are the usual spaces of time signals, i.e., finite energy and Sobolev spaces, respectively.}. Hence the boundary contribution $B_v v(t)$ has the temporal regularity typically required in the well-posedness analysis of \eqref{eqn:pde-1}--\eqref{eqn:pde-4} for $H^1$ solutions. Therefore, the filter regularizes the disturbance acting at the boundary and avoids imposing such regularity directly on $w$ itself.
\end{rmk}

Then, consider the following assumption on the disturbance input $w(t)$.
\begin{enumerate}
\item[{\bf A1}] The disturbance input signal $w(t)$ belongs to the following set:
\begin{equation}
\mcw : = \{ w \in (L^2(0,\infty))^{n_{w}} : w^\top w \leq 1 \} .
\end{equation}
\end{enumerate}

We stress that the size of $\mcw$ is fixed to a unitary maximum 
signals. In addition, the set $\mcw$ is normalized to one 
without loss of generality, since the columns of $B_w$ can be rescaled to handle different maximal input magnitudes.

\vskip 1mm

Now, we are ready to state the problems  to be addressed in this work.
\begin{itemize}

\item[{\bf P1}] When the matrices $K_1$, $K_2$, $R$, and $S$ are given, determine if the coupled PDE/ODE system as defined from \eqref{eqn:pde-1} to \eqref{eqn:ode-2}, with \eqref{eqn:ics}, is ISS, while providing an upper-bound on the system's H$_1$-norm; and

\item[{\bf P2}] For the coupled PDE/ODE system in \eqref{eqn:pde-1}-\eqref{eqn:pde-4}, determine a stabilizing boundary controller of the form \eqref{eqn:ode-1} and \eqref{eqn:ode-2} such that the resulting closed-loop system is ISS with respect to $w \in \mcw$, in the H$_1$-sense.    

\end{itemize}

\vskip 1mm

\section{H$_1$-ISS Characterization} \label{sec:H1-ISS}

To address problems {\bf P1} and {\bf P2}, we first define the ISS notion to be considered in this paper, as well as the related concept of ultimate-bound set. 

\vskip 1mm

\begin{dfn} \label{dfn:iss}
The coupled PDE/ODE system \eqref{eqn:pde-1}-\eqref{eqn:ode-2}
is said to be (exponentially) ISS with respect to $w \in \mcw$, if there exist positive scalars $\alpha, \rho$ and $\varrho$ such that its solution satisfies \vspace*{-1mm}
\begin{multline}
\|(\xi,v,\eta)\|_{X} \leq \rho \e^{- \alpha t} \|(\xi_0,v_0,\eta_0)\|_{X} + \varrho \|w\|_\infty  \\ \forall \ t \geq 0 . 
\end{multline}
\end{dfn}

\vskip 2mm

\begin{dfn}
Assume that the coupled PDE/ODE system defined in \eqref{eqn:pde-1} to \eqref{eqn:ode-2} is exponentially ISS with respect to $\mcw$, for some scalars $\alpha, \rho$ and $\varrho$. Then, the system's  ultimate-bound set (often referred as reachable set) is given by \vspace{-0.5mm}
\begin{multline} \label{eqn:dfn-mcr}
\mcu_{\mathcal{B}} : = \{ (\xi,\partial_z \xi,v,\eta) \in \mcl_2^n \times \mcl_2^n \times \bbr^{n_v} \times \bbr^{n_\eta} : \\
\|(\xi,v,\eta)\|_X^2 \leq \varrho \} .    
\end{multline}
\end{dfn}

\vskip 1mm

Next, to provide numerically tractable methods to address problems {\bf P1} and {\bf P2}, we introduce the following lemma.

\vskip 1mm

\begin{lem} \label{lem:iss-lyap}
Consider the coupled PDE/ODE system defined in \eqref{eqn:pde-1}-\eqref{eqn:ode-2} and suppose that assumption {\bf A1} holds. Let $V: \bbr_+ \times \mcl_2^n \times \mcl_2^n \times \bbr^{n_v} \times \bbr^{n_\eta} \rightarrow \bbr_+$ be continuously differentiable. If there exist positive scalars $\epsilon_1$, $\epsilon_2$, $\alpha$ and $\beta$ such that
\begin{align}
& \epsilon_1 \|(\xi,v,\eta)\|_X^2 \leq V(t,\xi,\partial_z \xi, v, \eta) \leq \epsilon_2 \|(\xi,v,\eta)\|_X^2 \label{eqn:lem1-1} \\
& \partial_t V(t,\xi,\partial_z \xi, v, \eta) + (2 \alpha + \beta) V(t,\xi,\partial_z \xi, v, \eta) \notag \\[1mm]
& \hspace{60mm} \leq \beta \|w\|_2^2 , \label{eqn:lem1-2}    
\end{align}
then the coupled system in  \eqref{eqn:pde-1}-\eqref{eqn:ode-2} is (exponentially) ISS with respect to $\mcw$. Moreover, 
for any compatible $(\xi_0,v_0,\eta_0)$, the resulting state trajectory, as $t \rightarrow \infty$, converges to the following region (which is an estimate of the ultimate bound set): 
\begin{multline} \label{eqn:mcr-h1iss}
\mcr : = \{ (\xi,\partial_z \xi,v,\eta) \in \mcl_2^n \times \mcl_2^n \times \bbr^{n_v} \times \bbr^{n_\eta} :\\ V(t,\xi,\partial_z \xi,v,\eta) \leq 1 \} .    
\end{multline}
\end{lem}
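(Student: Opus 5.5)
The plan is a comparison (Grönwall-type) argument on the scalar function $t \mapsto V(t) := V(t,\xi(t,\cdot),\partial_z\xi(t,\cdot),v(t),\eta(t))$ along a solution issued from compatible initial data $(\xi_0,v_0,\eta_0)$. Since the initial data are in $\mcc_1^n([0,1])$ and satisfy the first-order compatibility conditions, and $v$ is continuous in time (by the Remark on the disturbance filter), the solution is regular enough in the H$_1$ sense. Hence $V(t)$ is absolutely continuous and \eqref{eqn:lem1-2} holds for almost every $t\geq 0$. I would interpret $\partial_t V$ in \eqref{eqn:lem1-2} as the derivative of $V$ along trajectories, and $\|w\|_2^2$ as $w(t)^\top w(t)$. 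I take this well-posedness and regularity as given; it is the standard setting of \citep{bastin16}.

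First, multiply \eqref{eqn:lem1-2} by the integrating factor $\e^{(2\alpha+\beta)t}$. This gives
\begin{equation*}
\frac{d}{dt}\left(\e^{(2\alpha+\beta)t}V(t)\right) \leq \beta \e^{(2\alpha+\beta)t} w(t)^\top w(t) .
\end{equation*}
Integrating from $0$ to $t$ and bounding $w(s)^\top w(s) \leq \|w\|_\infty^2$ then yields
\begin{equation*}
V(t) \leq \e^{-(2\alpha+\beta)t}V(0) + \frac{\beta}{2\alpha+\beta}\left(1-\e^{-(2\alpha+\beta)t}\right)\|w\|_\infty^2
\leq \e^{-2\alpha t}V(0) + \|w\|_\infty^2 .
\end{equation*}

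Second, use the sandwich bounds \eqref{eqn:lem1-1} on both sides to get
$\epsilon_1\|(\xi,v,\eta)\|_X^2 \leq \epsilon_2 \e^{-2\alpha t}\|(\xi_0,v_0,\eta_0)\|_X^2 + \|w\|_\infty^2$.
Taking square roots and using $\sqrt{a+b}\leq\sqrt a+\sqrt b$ gives
\begin{equation*}
\|(\xi,v,\eta)\|_X \leq \sqrt{\epsilon_2/\epsilon_1}\,\e^{-\alpha t}\|(\xi_0,v_0,\eta_0)\|_X + \epsilon_1^{-1/2}\|w\|_\infty .
\end{equation*}
This is exactly Definition~\ref{dfn:iss} with $\rho=\sqrt{\epsilon_2/\epsilon_1}$ and $\varrho=\epsilon_1^{-1/2}$. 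The factor $2\alpha$ in \eqref{eqn:lem1-2} is what produces the decay rate $\alpha$ after the square root.

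Third, for the ultimate bound, use {\bf A1}: $w^\top w\leq 1$ gives $\|w\|_\infty\leq 1$. The first estimate then becomes $V(t)\leq \e^{-(2\alpha+\beta)t}V(0) + \frac{\beta}{2\alpha+\beta}$, so $\limsup_{t\to\infty}V(t)\leq \frac{\beta}{2\alpha+\beta}<1$. Hence the trajectory enters $\mcr$ in finite time and stays there, since $V(t)\leq 1$ for all sufficiently large $t$. Moreover, $\mcr$ is forward invariant: if $V(t_0)\leq 1$, the same estimate started at $t_0$ keeps $V\leq 1$.

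The main obstacle is the first step, namely justifying that $V$ evaluated along solutions is differentiable (or at least absolutely continuous) and that the pointwise inequality \eqref{eqn:lem1-2} integrates. This relies on the H$_1$ well-posedness guaranteed by compatible $\mcc_1$ initial data and by the filtered boundary input $B_v v$. If only a.e. differentiability is available, I would use the integral form of the comparison lemma instead; alternatively, I would argue by density of smoother data together with continuity of $V$.
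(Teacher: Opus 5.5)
Your argument is correct. It is the standard comparison (integrating-factor) argument that the paper has in mind when it omits the proof as a straightforward adaptation of \citep[Lemma~1]{coutinho:2024}. It gives $\rho=\sqrt{\epsilon_2/\epsilon_1}$, $\varrho=\epsilon_1^{-1/2}$ and $\limsup_{t\to\infty}V\le \beta/(2\alpha+\beta)<1$.

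One small point concerns forward invariance of $\mcr$. There you must use the sharp bound that keeps the factor $\bigl(1-\exp(-(2\alpha+\beta)(t-t_0))\bigr)$. The cruder bound written in your third step only yields $V\le 1+\beta/(2\alpha+\beta)$.
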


The proof of the above Lemma is straightforward based on knowing \citep[Lemma~1]{coutinho:2024} and it is omitted for brevity.

Lemma~\ref{lem:iss-lyap} can be applied to deduce a bound on the sup norm of $\xi(t,z)$ given by 
\begin{equation}
\| \xi\|_{\sup} : = \sup_{z \in [0,1]}
\sqrt{\xi(t,z)^\top \xi(t,z)} .    
\end{equation}
To illustrate this point, let 
\begin{multline}
\mcr_0 : = \{ (\xi,\partial_z \xi,v,\eta) \in \mcl_2^n \times \mcl_2^n \times \bbr^{n_v} \times \bbr^{n_\eta} : \\  V(t,\xi,\partial_z \xi,v,\eta) \leq 1 + c \},
\end{multline}
for some $c >0$, and assume $(\xi_0,v_0,\eta_0) \in \mcr_0$. Then, \eqref{eqn:lem1-1} and \eqref{eqn:lem1-2} imply that any state trajectory (driven by compatible initial conditions and inputs) starting from $\mcr_0$ remains in $\mcr_0$ for all $t \geq 0$ and converges to $\mcr$ as $t \rightarrow \infty$. 
From the fact that \citep{zheng2018input}
\begin{equation}
\xi(t,z)^\top \xi(t,z) \leq 2 \| \xi \|_{\mathcal{L}_2}^2 
+ \|\partial_z \xi \|_{\mathcal{L}_2}^2 , \ \forall \ z \in [0,1], 
\end{equation}
it follows that\vspace{-1mm}
\begin{multline}
\| \xi\|_{\sup}^2 \leq 2 \| \xi \|_{\mathcal{L}_2}^2 
+ \|\partial_z \xi \|_{\mathcal{L}_2}^2 \leq 2 \big( \|\xi\|_{\mathcal{L}_2} + \\ \|\partial_z \xi\|_{\mathcal{L}_2} 
+ \| v\|_2 + \|\eta \|_2 \big)^2 = 2 \|(\xi,v,\eta)\|_X^2.
\end{multline}
Next, assuming that $\|(\xi,v,\eta)\|_{X} \in \mcr_0$, for all $t \geq 0$, we obtain
\begin{equation}
    \|\xi\|_{\sup} \leq \sqrt{2(1+c)/\epsilon_1}  \quad \forall \ t \geq 0. 
\end{equation}
In addition, from the fact that $\|(\xi,\partial_z\xi,v,\eta)\|_X \rightarrow \mcr$ as $t \rightarrow \infty$, it follows the ultimate bound on the sup norm, denoted as $\|\xi\|_{\sup}^{(ub)}$, satisfies     
\begin{equation}    
\|\xi\|_{\sup}^{(ub)} \leq
\sqrt{2/\epsilon_1}  ,
\end{equation}
since $\xi(t,z)$ converges to $\mcr$ as $t \rightarrow \infty$.

\section{Stability Analysis} \label{sec:H1-sa}

In this section, we propose a numerically-tractable (LMI) condition to guarantee ISS with respect to $\mcw$ of the coupled system of \eqref{eqn:pde-1}-\eqref{eqn:ode-2}, as well as an optimization problem  to derive a tight estimate of the system's ultimate-bound set $\mcu_{\mathcal B}$. To this end, we consider  the following Lypuanov function candidate:
\begin{equation} \label{eqn:Vt-1+4}
 V = V(\xi,\partial_z \xi,v,\eta) = 
 \sum_{i=1}^4 V_i ,   
\end{equation}
where
\begin{align}
V_1 & = \int_0^1 \e^{-\mu_1 z} \xi(t,z)^\top P_1 \xi(t,z) dz, \notag \\[2mm]
V_2 & = \int_0^1 \e^{-\mu_2 z} \partial_z \xi(t,z)^\top P_2 \partial_z \xi(t,z) dz, \notag \\[2mm]
V_3 & = \eta(t)^\top P_3 \eta(t), \notag \\[2mm]
V_4 & = v(t)^\top P_4 v(t), \notag
\end{align}
with $\mu_1$ and $\mu_2$ given positive real scalars and $P_1 \in \bbd^{n}$, $P_2 \in \bbd^{n}$, $P_3 \in \bbs^{n_v}$ and $P_4 \in \bbs^{n_\eta}$ positive definite matrices to be determined. 

\vskip 2mm

\begin{thm} \label{thm:h1-iss}
Consider the coupled PDE/ODE system defined in \eqref{eqn:pde-1}-\eqref{eqn:ode-2}, and suppose that Assumption {\bf A1} holds. Let $\alpha$ and $\beta$ be two given positive scalars such that 
\begin{equation} \label{eqn:thm1-LMI-1}
    2 \alpha + \beta \leq \e^{-1} \lambda ,
\end{equation}
where $\lambda$ is the smallest eigenvalue of $\Lambda$. Suppose there exist positive definite matrices $P_1 \in \bbd^{n}$, $P_2 \in \bbd^{n}$, $P_3 \in \bbs^{n_v}$ and $P_4 \in \bbs^{n_\eta}$ such that the following LMI holds
\begin{equation} \label{eqn:thm1-LMI-2}
\begin{bmatrix} 
\Psi_a & \star & \star \\
P_1 \Lambda \Phi_a & - P_1 \Lambda & \star \\
P_2 \Phi_b & 0 & - P_2 \Lambda 
\end{bmatrix} < 0 ,
\end{equation}
where  
\begin{align}
\Psi_a & : = \left[ \psi_{ij} \right]_{i,j=1,\ldots,5}, \ \psi_{ij} = \psi_{ji} , \label{eqn:thm1-not-2} \\[2mm]
\Phi_a & := \begin{bmatrix} 0 & (H+B_u K_2 ) & B_u K_1 & (B_v + B_u K_2 D_v) & 0 \end{bmatrix} , \notag \\[2mm]
\Phi_b & : = \left[ \begin{matrix} (H+B_u K_2) \Lambda & - B_u K_1 S &  - B_u K_1 R \end{matrix} \right. \notag \\
& \hspace{5mm}    \left. \begin{matrix} - (B_vM \!+\! B_u K_1 S D_v \!+\! B_u K_2 D_v M)  & -B_v B_w \end{matrix} \right] , \notag
\end{align}
with 
\begin{align}
\vartheta & := \alpha+0.5 \beta, \ 
\psi_{11} : = - \e^{-1} \Lambda P_2, \  
\psi_{22} : = - \e^{-1} \Lambda P_1 , \notag \\
\psi_{32} & : = P_3 S, \ \psi_{33} := \He\{P_3 R+ \vartheta P_3\},  \ \psi_{34} := P_3 S D_v,  \notag \\ 
\psi_{44} & := \He\{P_4 M+ \vartheta P_4\} , \  
\psi_{45} := P_4 B_w , \ 
\psi_{55} := - \beta I_{n_w} , \notag   
\end{align}
and the remaining $\psi_{ij}$ are matrices of zeros having appropriate dimensions. 

Then, the coupled system defined in  \eqref{eqn:pde-1}-\eqref{eqn:ode-2} is (exponentially) ISS with respect to $\mcw$.  Moreover, the set $\mcr$ defined in \eqref{eqn:mcr-h1iss}
is an estimate of the system's ultimate-bound set, where $V$ is as defined in \eqref{eqn:Vt-1+4} with $\mu_1=\mu_2=1$.  
\end{thm}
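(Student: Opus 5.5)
The plan is to verify the two hypotheses of Lemma~\ref{lem:iss-lyap} for the functional $V$ in \eqref{eqn:Vt-1+4} with $\mu_1=\mu_2=1$, and then invoke that lemma.

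\textbf{Sandwich bound \eqref{eqn:lem1-1}.} On $[0,1]$ we have $\e^{-1}\le \e^{-z}\le 1$. Hence $V$ is bounded above and below by multiples of $\|\xi\|_{\mcl_2}^2+\|\partial_z\xi\|_{\mcl_2}^2+\|v\|_2^2+\|\eta\|_2^2$. The constants are the extreme eigenvalues of $P_1,\dots,P_4$, together with the factor $\e^{-1}$ in the lower bound. The equivalence between this sum of squares and $\|(\xi,v,\eta)\|_X^2$ (a factor between $1$ and $4$) then gives $\epsilon_1,\epsilon_2$.

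\textbf{Derivative of $V_1$.} For compatible $C^1$ solutions, I use $\partial_t\xi=-\Lambda\partial_z\xi$ and integrate by parts. Since $P_1$ and $\Lambda$ are diagonal, they commute, which gives
\[
\dot V_1=-\big[\e^{-z}\xi^\top P_1\Lambda\xi\big]_0^1-\int_0^1 \e^{-z}\xi^\top P_1\Lambda\xi\,dz .
\]
The integral term is at most $-\lambda V_1$. In the boundary term, $\xi(t,0)$ is replaced using \eqref{eqn:pde-2} with $u=K_1\eta+K_2(\xi(t,1)+D_vv)$. This gives exactly $\xi(t,0)=\Phi_a\chi$, where
\[
\chi=(\partial_z\xi(t,1),\xi(t,1),\eta,v,w).
\]

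\textbf{Derivative of $V_2$.} Differentiating in $z$ shows that $\zeta=\partial_z\xi$ satisfies the same transport equation. Its boundary value follows from $\partial_z\xi(t,0)=-\Lambda^{-1}\partial_t\xi(t,0)$, where I differentiate \eqref{eqn:pde-2} in time, substitute $\partial_t\xi(t,1)=-\Lambda\partial_z\xi(t,1)$, $\dot\eta=R\eta+Sy$ and $\dot v=Mv+B_ww$, and collect terms. The result should be $\Lambda\partial_z\xi(t,0)=\Phi_b\chi$, up to sign, which is why the LMI contains $P_2\Phi_b$ paired with $-P_2\Lambda$. This uses $\Lambda\,\partial_z\xi(t,0)^\top P_2\,\partial_z\xi(t,0)=(\Phi_b\chi)^\top P_2\Lambda^{-1}(\Phi_b\chi)$.

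\textbf{ODE parts and assembly.} $\dot V_3$ and $\dot V_4$ are direct, with $y=\xi(t,1)+D_vv$ producing the entries $P_3S$ and $P_3SD_v$. Adding $(2\alpha+\beta)(V_3+V_4)=2\vartheta(V_3+V_4)$ gives the $\He\{\cdot+\vartheta\cdot\}$ blocks, and $-\beta|w|^2$ gives $\psi_{55}$. For the PDE parts, \eqref{eqn:thm1-LMI-1} makes the interior terms $(-\lambda+2\alpha+\beta)V_{1,2}\le0$. The boundary terms at $z=1$ give $-\e^{-1}\xi(t,1)^\top\Lambda P_1\xi(t,1)$ and the corresponding term in $\partial_z\xi(t,1)$ (the entries $\psi_{22}$, $\psi_{11}$). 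The terms at $z=0$ give $\chi^\top(\Phi_a^\top P_1\Lambda\Phi_a+\Phi_b^\top P_2\Lambda^{-1}\Phi_b)\chi$. Altogether
\[
\dot V+(2\alpha+\beta)V-\beta|w|^2\le \chi^\top\Big(\Psi_a+\Phi_a^\top P_1\Lambda\Phi_a+\Phi_b^\top P_2\Lambda^{-1}\Phi_b\Big)\chi .
\]
A Schur complement applied to \eqref{eqn:thm1-LMI-2}, where $(P_1\Lambda\Phi_a)^\top(P_1\Lambda)^{-1}(P_1\Lambda\Phi_a)=\Phi_a^\top P_1\Lambda\Phi_a$, shows the right side is negative. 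Since $|w|^2\le1$ by \textbf{A1}, this gives \eqref{eqn:lem1-2}, and Lemma~\ref{lem:iss-lyap} yields ISS and the estimate $\mcr$.

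\textbf{Main obstacle.} I expect the hardest step to be the bookkeeping for $\partial_z\xi(t,0)$. All signs and the $S$, $R$, $M$ terms in $\Phi_b$ must match, and that computation needs $\xi\in C^1$ together with compatibility so that the time-differentiated boundary condition holds. The $V$ estimates should first be derived for such classical solutions and then extended by density to $H_1$ data.
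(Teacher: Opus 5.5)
Your route is the paper's: the weighted functional with $\mu_1=\mu_2=1$, substitution of the boundary condition for $\xi(t,0)$, the time-differentiated boundary condition for $\partial_z\xi(t,0)$, a Schur complement, then Lemma~\ref{lem:iss-lyap}. Your interior estimate $-\int_0^1 e^{-z}\xi^\top P_1\Lambda\xi\,dz\le-\lambda V_1$ and the factor $4$ in the sandwich bound are in fact more careful than the paper's. But the step you flagged and left as ``should be'' is where the argument breaks. Carrying it out, $\dot u=K_1\dot\eta+K_2\dot y$ with $\dot y=\partial_t\xi(t,1)+D_v\dot v=-\Lambda\partial_z\xi(t,1)+D_vMv+D_vB_ww$. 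This gives
\[
\Lambda\partial_z\xi(t,0)=-\partial_t\xi(t,0)=\Phi_b\chi+E\chi,\qquad E:=\begin{bmatrix}0&0&0&0&-B_uK_2D_vB_w\end{bmatrix}.
\]
The first four block columns agree with $\Phi_b$ exactly, with no sign ambiguity. The $w$-column, however, is $-(B_v+B_uK_2D_v)B_w$, not $-B_vB_w$. So the $z=0$ term of $\dot V_2$ is $\chi^\top(\Phi_b+E)^\top P_2\Lambda^{-1}(\Phi_b+E)\chi$. Set $\Theta:=\Psi_a+\Phi_a^\top P_1\Lambda\Phi_a+\Phi_b^\top P_2\Lambda^{-1}\Phi_b$. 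Your key inequality $\dot V+(2\alpha+\beta)V-\beta|w|^2\le\chi^\top\Theta\chi$ then does not follow. The stated LMI says nothing about the cross term $2(\Phi_b\chi)^\top P_2\Lambda^{-1}E\chi$ or about $(E\chi)^\top P_2\Lambda^{-1}E\chi$.

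The paper's proof has the same gap. When it expands $\partial_t\xi(t,0)$ in \eqref{eqn:proof-thm1-4}, it keeps $B_uK_2D_vMv$ from $K_2\dot y$ but loses $B_uK_2D_vB_ww$. The omission is not vacuous: in the first numerical example $B_uK_2D_vB_w=[0.0021\ \ 0]^\top\neq 0$.

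To get a correct proof you must do one of two things. Either add the hypothesis $B_uK_2D_vB_w=0$ (e.g.\ $D_v=0$), or replace the last block of $\Phi_b$ by $-(B_v+B_uK_2D_v)B_w$. With either change your argument goes through verbatim.

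If you keep the LMI as stated, strictness still gives $\Theta\le-\epsilon I$. Young's inequality then gives $\dot V+(2\alpha+\beta)V\le(\beta+C)|w|^2$ for some $C>0$. This still yields exponential ISS, but only the ultimate bound $V\le(\beta+C)/(2\alpha+\beta)$, which need not lie inside $\mathcal{R}=\{V\le 1\}$.

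Separately, a classical computation of $\dot V_2$ needs $\xi\in C^2$ with second-order compatibility and a sufficiently smooth $w$, since $w(t)$ itself enters $\partial_z\xi(t,0)$. Your density step must therefore approximate $w$ as well as the initial data.
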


\vskip 1mm

{\em Proof:} Let $V(\xi,\partial_z \xi,v,\eta)$ be as defined in \eqref{eqn:Vt-1+4}. Firstly, since $P_1,P_2,P_3$ and $P_4$ are definite positive, it is straightforward to show that \eqref{eqn:lem1-1} holds with $\epsilon_1$ and $\epsilon_2$ given by
\begin{align} 
\epsilon_1 & = \min\{\e^{-\mu_1}p_1,\e^{-\mu_2}p_2,p_3,p_4\} , \notag \\
\epsilon_2 & = \max\{q_1,q_2,q_3,q_4\}, \label{eqn:proof-thm1-0}
\end{align}
where $p_i$ and $q_i$ are respectively the smallest and largest eigenvalues of $P_i$, for $i=1,\ldots,4$. 

Next, we demonstrate that the conditions in \eqref{eqn:thm1-LMI-1} and \eqref{eqn:thm1-LMI-2} imply that \eqref{eqn:lem1-2} holds. To this end, we compute the time derivative of $V$ along the system's trajectories in the following steps:

\vskip 1mm

\noindent $(i)$ Time-derivative of $V_1$ $\Rightarrow$
\begin{align}
    \dot V_1 & = 2 \int_0^1 \e^{-\mu_1 z} \xi(t,z)^\top P_1 \partial_t \xi(t,z) dz \notag \\
    & = - 2 \int_0^1 \e^{-\mu_1 z} \xi(t,z)^\top P_1 \Lambda \partial_z \xi(t,z) dz 
    \notag \\ 
    & = - \left[ \e^{-\mu_1 z} \xi(t,z)^\top \Lambda P_1 \xi(t,z) \right]_{z=0}^{z=1} \notag \\
    & \hspace{15mm} - \mu_1 \int_0^1 \e^{-\mu_1 z} \xi(t,z)^\top P_1 \Lambda \xi(t,z) dz \notag \\
    & = - \e^{-\mu_1} \xi(t,1)^\top \Lambda P_1 \xi(t,1) + \xi(t,0)^\top \Lambda P_1 \xi(t,0) \notag \\ & \hspace{12mm} - \mu_1 \int_0^1 \e^{-\mu_1 z} \xi(t,z)^\top P_1 \Lambda \xi(t,z) dz . \label{eqn:proof-thm1-1}
\end{align}

Taking the boundary conditions into account, we obtain from \eqref{eqn:proof-thm1-1} the following:  
\begin{multline} \label{eqn:proof-thm1-2}
\dot V_1 = \phi_1^\top \big( \Psi_1 + \Phi_1^\top P_1 \Lambda \Phi_1 \big) \phi_1 \\ -  \int_0^1 
\mu_1 \e^{-\mu_1 z} \xi(t,z)^\top P_1 \Lambda \xi(t,z) dz ,
\end{multline}
where
\begin{align}
    \phi_1^\top & := \begin{bmatrix} \xi(t,1)^\top & \eta(t)^\top & v(t)^\top \end{bmatrix}, \notag \\
    \Phi_1 & := \begin{bmatrix} (H+B_u K_2) & B_u K_1 & (B_v + B_u K_2 D_v ) \end{bmatrix}, \notag \\
    \Psi_1 & := \diag \{ -\e^{-\mu_1} \Lambda P_1, 0_{n_\eta} , 0_{n_v} \} . \notag 
\end{align}

\vskip 1mm

\noindent $(ii)$ Time-derivative of $V_2$ $\Rightarrow$
\begin{align}
    \dot V_2 & = 2 \int_0^1 \e^{-\mu_2 z} \partial_z \xi(t,z)^\top P_2 \partial_t \partial_z \xi(t,z) dz \notag \\
    & = - 2 \int_0^1 \e^{-\mu_2 z} \partial_z \xi(t,z)^\top P_2 \Lambda \partial_z \big(\partial_z \xi(t,z) \big) dz \notag \\ 
    & = - \left[ \e^{-\mu_2 z} \partial_z \xi(t,z)^\top \Lambda P_2 \partial_z \xi(t,z) \right]_{z=0}^{z=1} \notag \\
    & \hspace{5mm} - \mu_2 \int_0^1 \e^{-\mu_2 z} \partial_z \xi(t,z)^\top P_2 \Lambda \partial_z \xi(t,z) dz \notag \\
    & = - \e^{-\mu_2} \partial_z \xi(t,1)^\top \Lambda P_2 \partial_z \xi(t,1) \notag \\
    & \hspace{5mm} + \partial_z \xi(t,0)^\top \Lambda P_2 \partial_z \xi(t,0) \notag \\ 
    & \hspace{5mm} - \int_0^1 \mu_2 \e^{-\mu_2 z} \partial_z \xi(t,z)^\top P_2 \Lambda \partial_z \xi(t,z) dz . \label{eqn:proof-thm1-3}
\end{align}

Now, we have to obtain a boundary condition coupling $\partial_z \xi(t,1)$ and $\partial_z \xi(t,0)$. Notice from \eqref{eqn:pde-2} that the time derivative of $\xi(t,0)$ can be cast as follows
\begin{align}
\partial_t \xi(t,0) & = H \partial_t \xi(t,1) + B_u \dot u + B_v \dot v \notag  \\
& = H \partial_t \xi(t,1) + B_u \big( K_1 \dot \eta + K_2 \dot y(t) \big) \notag \\
& \hspace{25mm} + B_v \big( M v(t) + B_w w(t) \big)
\notag \\
& = (H + B_u K_2) \partial_t \xi(t,1) + B_u K_1 S \xi(t,1) \notag \\
& \hspace{3mm} + \big( B_v M +B_u K_1 S D_v + B_u K_2 D_v M \big) v(t) \notag 
\end{align}
\begin{align}
& \hspace{5mm} + B_u K_1 R \eta(t) + B_v B_w w(t) 
\label{eqn:proof-thm1-4} .  
\end{align}
Hence, taking \eqref{eqn:pde-1} into account, we obtain the following from the above expression
\begin{equation} \label{eqn:proof-thm1-5} 
    \partial_z \xi(t,0) = \Lambda^{-1} \Phi_2 \phi_2 ,
\end{equation}
where 
\begin{align}
\Phi_2 & := \left[ \begin{matrix} (H+B_uK_2)\Lambda & -B_uK_1S & - B_u K_1 R \end{matrix} \right. \notag \\
& \hspace{15mm} \left. \begin{matrix} - (B_v M+ B_u K_1 S D_v) & - B_v B_w \end{matrix} \right] , \label{eqn:phi2} \\
\phi_2^\top & := \begin{bmatrix} \partial_z \xi(t,1)^\top & 
\xi(t,1)^\top & \eta(t)^\top & v(t)^\top & w(t)^\top \end{bmatrix} . \notag 
\end{align}

Therefore, applying \eqref{eqn:proof-thm1-4} and \eqref{eqn:proof-thm1-5} into  \eqref{eqn:proof-thm1-3} leads to 
\begin{multline} \label{eqn:proof-thm1-6}
\dot V_2 = \phi_2^\top \big( \Psi_2 + \Phi_2^\top P_2 \Lambda^{-1} \Phi_2 \big) \phi_2 \\ - \int_0^1 \mu_2 \e^{-\mu_2 z} \partial_z \xi(t,z)^\top P_2 \Lambda \partial_z \xi(t,z) dz , 
\end{multline}
where
\begin{equation}
    \Psi_2 = \diag\{ - \e^{-\mu_2} \Lambda P_2, 0_{n},0_{n_\eta},0_{n_v},0_{n_w} \} .
\end{equation}

\vskip 1mm

\noindent $(iii)$ Time-derivative of $V_3$ $\Rightarrow$
\begin{align} 
\dot V_3 & = 2 \eta(t)^\top P_3 \dot \eta(t) \notag \\
& = 2 \eta(t)^\top P_3 \big( R \eta(t) + S \xi(t,1) + S D_v v(t)\big) \notag \\
& = \phi_2^\top \Psi_3 \phi_2 ,
\label{eqn:proof-thm1-7}
\end{align}
where 
\begin{equation} \notag 
   \Psi_3 := \begin{bmatrix}
       0 & \star & \star & \star & \star \\
       0 & 0     & \star & \star & \star \\
       0 & P_3 S & \He\{P_3 R\} & P_3 S D_v & \star \\
       0 & 0     & \star              & 0 & \star \\
       0 & 0     & 0    & 0     & 0  
   \end{bmatrix} .
\end{equation}

\vskip 1mm

\noindent $(iv)$ Time-derivative of $V_4$ $\Rightarrow$
\begin{align} 
\dot V_4 & = 2 v(t)^\top P_4 \dot v(t) = 
2 v(t)^\top P_4 \big( M v(t) + B_w w(t)\big) \notag \\
& = \phi_2^\top \Psi_4 \phi_2 , \label{eqn:proof-thm1-8} 
\end{align}
where 
\begin{equation} \notag 
   \Psi_4 := \begin{bmatrix}
       0 & \star & \star & \star & \star \\
       0 & 0     & \star & \star & \star \\
       0 & 0     & 0     & \star & \star \\
       0 & 0     & 0     & \He\{P_4M\} & P_4 B_w \\
       0 & 0     & 0     & \star     & 0  
   \end{bmatrix} .
\end{equation}

\vskip 2mm

Therefore, in the light of \eqref{eqn:proof-thm1-2}, \eqref{eqn:proof-thm1-6}, \eqref{eqn:proof-thm1-7} and \eqref{eqn:proof-thm1-8}, the ISS condition of \eqref{eqn:lem1-2} can be cast as follows:
\begin{multline} \label{eqn:proof-thm1-9}
 \partial_t V + (2\alpha+\beta) V - \beta w(t)^\top w(t) \\ =  
 \phi_2^\top \big( \Psi_b(\mu_1,\mu_2) + \Phi_a ^\top P_1 \Lambda \Phi_a + \Phi_2^\top P_2 \Lambda^{-1} \Phi_2 \big) \phi_2 \\
 + V_a + V_b < 0 \quad \forall w \in \mcw ,      
\end{multline}
where 
\begin{align}
\Psi_b(\mu_1,\mu_2) & := \left[ \hat{\psi}_{ij} \right]_{i,j=1,\ldots,5}, \ \hat{\psi}_{ij} = \hat{\psi}_{ji} , \notag \\
\Phi_a & := \begin{bmatrix} 0 & \Phi_1 & 0 \end{bmatrix} , \notag \\
V_a & := \int_0^1 \xi(t,z)^\top \big( (2 \alpha+\beta) P_1 \notag \\
& \hspace{20mm} - \mu_1 \e^{-\mu_1 z} P_1 \Lambda \big) \xi(t,z) dz , \notag \\
V_b & := \int_0^1 \partial_z \xi(t,z)^\top \big( (2 \alpha+\beta) P_2  \notag \\
& \hspace{15mm} - \mu_2 \e^{-\mu_2 z} P_2 \Lambda \big) \partial_z \xi(t,z) dz , \notag
\end{align}
with $\hat{\psi}_{ij}$ being equal to the matrices $\psi_{ij}$ defined in \eqref{eqn:thm1-not-2} excepting 
\begin{align*}
\hat{\psi}_{11} & := - \e^{-\mu_2} \Lambda P_2, \quad \hat{\psi}_{22} := - \e^{-\mu_1} \Lambda P_1 .     
\end{align*}

Hence, the following matrix inequalities are sufficient conditions for \eqref{eqn:proof-thm1-9} to hold:
\begin{align}
    \Psi_b(\mu_1,\mu_2) + \Phi_a^\top P_1 \Lambda \Phi_a + \Phi_2^\top P_2 \Lambda^{-1} \Phi_2 < 0 , \label{eqn:proof-thm1-12} \\
    (2 \alpha+\beta) P_1 - \mu_1 \e^{-\mu_1} P_1 \Lambda \leq 0 , \label{eqn:proof-thm1-13} \\
    (2 \alpha+\beta) P_2 - \mu_2 \e^{-\mu_2} P_2 \Lambda \leq 0 . \label{eqn:proof-thm1-14}
\end{align}

Now, notice that \eqref{eqn:proof-thm1-13} and \eqref{eqn:proof-thm1-14} are equivalent to
\begin{equation} \label{eqn:proof-thm1-15}
    \mu_i \e^{-\mu_i} \Lambda - (2 \alpha+\beta) \geq 0, \ i=1,2 ,
\end{equation}
since $P_1$ and $P_2$ are diagonal positive definite matrices. Therefore, one can set $\mu_1 = \mu_2 = \mu$ without loss of generality. In addition, by noting that the maximum of $\mu \e^{-\mu}$ is achieved for $\mu=1$, the following sufficient condition is obtained for \eqref{eqn:proof-thm1-15}
\begin{equation} \label{eqn:proof-thm1-16}
    \e^{-1} \Lambda - (2 \alpha+\beta) \geq 0, 
\end{equation}
which is implied by \eqref{eqn:thm1-LMI-1}. 

Next, from Schur's complement and taking into account that $\Psi_a = \Psi_b(1,1)$, the LMI in \eqref{eqn:thm1-LMI-2} ensures that \eqref{eqn:proof-thm1-12} is satisfied, which completes the proof.   \hfill $\Box$

\vskip 3mm

\section{Boundary Control Design} \label{sec:H1-cd}

In this section, we design a (dynamic) boundary controller 
of the form \eqref{eqn:ode-1}-\eqref{eqn:ode-2} to 
guarantee ISS (in the $\mch_1$ sense) for the coupled PDE/ODE system \eqref{eqn:pde-1}-\eqref{eqn:pde-4} with respect to $\mcw$. Furthermore, we are interested in obtaining a tight ultimate-bound set. We stress that the direct application of Theorem~\ref{thm:h1-iss} for control design leads to conditions that are difficult to solve numerically, since the inequalities are nonlinear in the control variables. In order to obtain 
an LMI-based condition for control design, we impose the following constraints on the linear filter in \eqref{eqn:pde-3}-\eqref{eqn:pde-4}. 

\vskip 1mm

\begin{itemize}

\item[{\bf A2}] The matrices $M$, $B_w$ and $D_v$ are diagonal and $n_\eta = n_v = n_w =n$.

\end{itemize}

\vskip 1mm

The above assumption implies that: $(i)$ the number of disturbance signals is equal to the number of measurements; and $(ii)$ the disturbance filter consists on single-input-single-output 1-st order filters. The first implication is not a problem, since we can always inflate $w$ with fictitious disturbances, and the second one mean that each input disturbance channel is filtered by decoupled first-order filters of the form
\begin{equation}
    \dot v_i(t) = - m_i v_i(t) + b_i w_i(t) , \ i=1,\ldots,n ,
\end{equation}
where $v_i$ and $w_i$ are the $i$-th entries of $v$ and $w$, respectively, and $m_i$ and $b_i$ are the $i$-th diagonal elements of $M$ and $B_w$, respectively.

Another difficulty appears when determining a relationship between $\partial_z\xi(t,0)$ and $\partial_z \xi(t,1)$, which comes out when computing the term $\dot V_2$ in \eqref{eqn:proof-thm1-3}. To overcome this problem, we express $\partial_z\xi(t,0)$ as: 
\begin{equation} \label{eqn:partialzxi0}
\partial_z \xi(t,0) = \Lambda^{-1} \tilde{\Phi}_2 \phi ,
\end{equation} 
where
\begin{align}
\tilde{\Phi}_2 & : = \left[ \begin{matrix} 
H \Lambda & 0 & 0 &\! -B_v M &\! - B_v B_w &\! - B_u K_1 & - B_u K_2 \end{matrix} \right] , \notag \\
\phi & := \begin{bmatrix}
    \phi_2^\top & \dot \eta(t)^\top & \dot y(t)^\top 
\end{bmatrix}^\top , \label{eqn:phi}
\end{align}
with $\phi_2$ as defined in \eqref{eqn:phi2}. 
Notice that elements of $\phi$ are related by the equations \eqref{eqn:pde-3}-\eqref{eqn:ode-2}, \eqref{eqn:proof-thm1-4} and \eqref{eqn:proof-thm1-5}, which can be expressed by the following equality constraints:
\begin{equation} \label{eqn:Gammas}
\Gamma_a \phi = 0 , \qquad \Gamma_b \phi = 0,
\end{equation}
where
\begin{align}
\Gamma_a & := \begin{bmatrix} 
0 & S & R & S D_v & 0 & -I & 0     
\end{bmatrix} , \notag \\
\Gamma_b & := \begin{bmatrix}
-\Lambda & 0 & 0 & D_v M & D_v B_w & 0 & -I  
\end{bmatrix}^\top , \label{eqn:gammas}
\end{align}
from the fact that 
\begin{multline} \notag 
\dot y = \partial_t\xi(t,1) + D_v \dot v \\ 
= - \Lambda \partial_z \xi(t,1) + D_v M v + D_v B_w w .    
\end{multline}

Then, by considering similar steps as in the proof of Theorem~\ref{thm:h1-iss} with additional algebraic manipulations, we obtain the following result.

\vskip 2mm

\begin{thm} \label{thm:h1-iss-bcd}
Consider the coupled PDE/ODE system defined in \eqref{eqn:pde-1}-\eqref{eqn:ode-2},  and suppose that Assumptions {\bf A1} and {\bf A2} hold. Let $\alpha$ and $\beta$ be two given positive scalars such that 
\begin{equation} \label{eqn:thm1-LMI-1-bcd}
    2 \alpha + \beta \leq \e^{-1} \lambda ,
\end{equation}
where $\lambda$ is the smallest eigenvalue of $\Lambda$. Suppose there exist matrices $Q_1 \in \bbd^{n}$, $Q_2 \in \bbd^{n}$, $G \in \bbd^{n}$, $P_3 \in \bbs^{n_v}$, $P_p \in \bbs^{n_\eta}$, with $P_p >0$, $K_1 \in \bbr^{n \times n}$, $K_p \in \bbr^{n \times n}$, $R_p \in \bbr^{n \times n}$, $S_p \in \bbr^{n \times n}$, and $L_p \in \bbr^{n \times n}$ such that
\begin{equation} \label{eqn:thm1-LMI-2-bcd}
\begin{bmatrix} 
\tilde{\Psi}_a & \star & \star \\
\tilde{\Psi}_b & - Q_1 \Lambda & \star \\
\tilde{\Psi}_c & 0 & - Q_2 \Lambda 
\end{bmatrix} < 0
\end{equation}
where
\begin{align}
\tilde{\Psi}_a & := \begin{bmatrix} \tilde{\psi}_{ij}
\end{bmatrix}_{i,j=1,\ldots,7} , \quad \tilde{\psi}_{ij} = \tilde{\psi}_{ji} , \label{eqn:thm3-not-2} \\[2mm]
\tilde{\Psi}_b & := \Lambda \left[ \begin{matrix} 0 & (HG+B_u K_p) & B_u K_1 & B_u K_p D_v \end{matrix} \right. \notag \\
& \hspace{55mm} \left. \begin{matrix} B_w G & 0 & 0 \end{matrix} \right] ,  \notag \\
\tilde{\Psi}_c & := \left[ \begin{matrix} H \Lambda G & 0 & 0 & - B_v M G  & -B_v B_w G \end{matrix} \right. \notag \\
& \hspace{45mm} \left. \begin{matrix} - B_u K_1 & - B_u K_p \end{matrix} \right] , \notag \\[2mm]
\tilde{\psi}_{11} & := \e^{-1} \Lambda (Q_2 - 2G), \quad \tilde{\psi}_{22} := \e^{-1} \Lambda (Q_1 - 2G), \notag \\[2mm]
\tilde{\psi}_{32} & := S_p , \ 
\tilde{\psi}_{33} := \He\{R_p + \vartheta P_3\} , \
\tilde{\psi}_{43} := D_v S_p^\top , \notag \\[2mm]
\tilde{\psi}_{44} & := \! \He\{ P_p M + \vartheta P_p \} , \quad \tilde{\psi}_{54} = B_w P_p , \notag \\[2mm] 
\tilde{\psi}_{55} & := \beta(I \!-\! 2G), \quad 
\tilde{\psi}_{62} := S_p, \quad  
\tilde{\psi}_{63} := R_p , \notag \\[2mm] \tilde{\psi}_{64} & := S_p D_v, \quad 
\tilde{\psi}_{66} := -2 P_3 , \quad
\tilde{\psi}_{71} := - L_p \Lambda , \notag 
\end{align}
\begin{align}
\tilde{\psi}_{74} & := L_p D_v M , \ 
\tilde{\psi}_{75} := L_p D_v B_w , \
\tilde{\psi}_{77} & := -\!L_p \!-\! L_p^\top , \notag
\end{align}
with the remaining $\tilde{\psi}_{ij}$ being matrix of zeros and $\vartheta := \alpha+0.5 \beta$. Then, the coupled system defined in  \eqref{eqn:pde-1}-\eqref{eqn:ode-2}, with  
\begin{equation} \notag 
K_1, \ K_2 := K_p G^{-1}, \ R := P_3^{-1} R_p , \ S := P_3^{-1} S_p G^{-1} , 
\end{equation}
is (exponentially) ISS with respect to $\mcw$.
Moreover, $\mcr$ given by\eqref{eqn:mcr-h1iss}, where
$V$ is as defined in \eqref{eqn:Vt-1+4} with $P_1 = Q_1^{-1}$, $P_2 = Q_2^{-1}$, $P_3$, $P_4 = G^{-1} P_p G^{-1}$ and $\mu_1=\mu_2=1$, is an estimate of the { system's ultimate-bound set.}  
\end{thm}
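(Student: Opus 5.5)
The plan is to reduce the claim to Lemma~\ref{lem:iss-lyap} using the same Lyapunov functional $V=\sum_{i=1}^4 V_i$ as in Theorem~\ref{thm:h1-iss}, with $\mu_1=\mu_2=1$, $P_1=Q_1^{-1}$, $P_2=Q_2^{-1}$, $P_3$, and $P_4=G^{-1}P_pG^{-1}$. First I will check positivity. Taking the $(6,6)$ block of \eqref{eqn:thm1-LMI-2-bcd} gives $P_3>0$. The $(2,2)$-type blocks $-Q_1\Lambda$ and $-Q_2\Lambda$ give $Q_1,Q_2>0$, since they are diagonal. The $(5,5)$ block $\beta(I-2G)<0$ gives $G>\tfrac12 I>0$, so $G$ is invertible. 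With $P_p>0$, all $P_i$ are positive definite, and \eqref{eqn:lem1-1} holds with $\epsilon_1,\epsilon_2$ as in \eqref{eqn:proof-thm1-0}. Condition \eqref{eqn:thm1-LMI-1-bcd} again disposes of the distributed terms $V_a,V_b$ exactly as in the proof of Theorem~\ref{thm:h1-iss}. So it remains to show that the boundary quadratic form in \eqref{eqn:proof-thm1-9} is negative.

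To do this I will not substitute the controller into $\partial_z\xi(t,0)$. Instead I will use the extended vector $\phi$ of \eqref{eqn:phi} and the representation \eqref{eqn:partialzxi0}. In this representation the controller gains appear linearly through $\dot\eta$ and $\dot y$, and both are treated as free variables. The coupling is restored by the equality constraints \eqref{eqn:Gammas}. The derivative is then rewritten as a quadratic form in $\phi$:
\begin{equation*}
\phi^\top\big(\bar\Psi+\bar\Phi_1^\top P_1\Lambda\bar\Phi_1+\tilde\Phi_2^\top P_2\Lambda^{-1}\tilde\Phi_2\big)\phi .
\end{equation*}
I will add to it the null terms $2\phi^\top(\cdot)\Gamma_a\phi$ and $2\phi^\top(\cdot)\Gamma_b\phi$ (Finsler-type multipliers). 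The multiplier on $\Gamma_a$ is chosen proportional to $P_3$, which produces the sixth block row $-2P_3$ and the entries $P_3S$ and $P_3R$. The multiplier on $\Gamma_b$ is a free matrix $L$, which produces the seventh row with $-L-L^\top$, $-L\Lambda$, $LD_vM$ and $LD_vB_w$. Negativity of the resulting augmented matrix is sufficient for negativity of the original form on the constraint set.

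Next I will apply a congruence transformation to linearize the products of gains and Lyapunov matrices. The transformation is $\diag\{G,G,I,G,G,\ldots\}$ on the $\phi$-blocks, together with Schur complements of the $P_1\Lambda$ and $P_2\Lambda^{-1}$ terms. These are then premultiplied by $Q_1=P_1^{-1}$ and $Q_2=P_2^{-1}$, which gives the $-Q_1\Lambda$ and $-Q_2\Lambda$ blocks. The change of variables is $K_p=K_2G$, $R_p=P_3R$, $S_p=P_3SG$, $P_p=GP_4G$, with $L_p$ the transformed multiplier. Under it the products become the linear entries of $\tilde\Psi_b$ and $\tilde\Psi_c$. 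Assumption {\bf A2} is used here: because $M$, $B_w$ and $D_v$ are diagonal, they commute with the diagonal $G$. This is what lets terms like $B_wG$, $MG$ and $D_vM$ be written with $G$ on the needed side.

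The main obstacle is the diagonal blocks that become $-\e^{-1}G\Lambda P_2G$, $-\e^{-1}G\Lambda P_1G$ and $-\beta G^2$ after the congruence; these are nonlinear in the decision variables. I will handle them with the standard bound $-GQ^{-1}G\le Q-2G$, which follows from $(G-Q)Q^{-1}(G-Q)\ge0$ and is valid here because all matrices involved are diagonal and commute with $\Lambda$. Applied with $Q=Q_2$, $Q=Q_1$ and $Q=I$, this yields $\tilde\psi_{11}$, $\tilde\psi_{22}$ and $\tilde\psi_{55}=\beta(I-2G)$. Consequently \eqref{eqn:thm1-LMI-2-bcd} implies the transformed inequality, hence \eqref{eqn:lem1-2}, and Lemma~\ref{lem:iss-lyap} gives ISS and the ultimate-bound estimate $\mcr$. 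The delicate bookkeeping is making each block entry match, including the placement of the $G$ factors.
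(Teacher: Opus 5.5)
Your proposal is correct and is essentially the paper's proof run in the forward direction. It uses the same ingredients:
\begin{itemize}
\item the same functional with $\mu_1=\mu_2=1$;
\item the null terms from $\Gamma_a\phi=0$ (multiplier $P_3$, giving row 6) and from $\Gamma_b\phi=0$ (multiplier $L_b$ with $L_p=GL_bG$, giving row 7);
\item the congruence by $\diag\{G,G,I,G,G,I,G\}$ with $K_p=K_2G$, $S_p=P_3SG$, $R_p=P_3R$, $P_p=GP_4G$;
\item Schur complements rescaled by $Q_1,Q_2$, and the bound $-GQ^{-1}G\le Q-2G$.
\end{itemize}
The only difference is direction: the paper starts from the LMI and works back to the Lyapunov inequality.

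Two smaller points. Your explicit extraction of $P_3>0$, $Q_1,Q_2>0$ and $G>\tfrac12 I$ from diagonal blocks is more careful than the paper's bare remark that $G$ is nonsingular. When you do the deferred bookkeeping, the $v$- and $w$-columns of $\tilde{\Psi}_b$ come out as $\Lambda(B_vG+B_uK_pD_v)$ and $0$. The printed entries $B_uK_pD_v$ and $B_wG$ therefore look like a typo in the statement, not a defect of your argument.
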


\vskip 1mm

\noindent {\em Proof:}
Assume that there exists a solution to \eqref{eqn:thm1-LMI-2-bcd}. From the fact that $- \bar{G}^\top Q^{-1} \bar{G} \leq Q - \bar{G} - \bar{G}^\top$ for any $Q>0$ and any nonsingular $\bar{G}$, the LMI in \eqref{eqn:thm1-LMI-2-bcd} implies that:
\begin{equation} \label{eqn:thm2-1}
\begin{bmatrix} 
\bar{\Psi}_a & \star & \star \\
\tilde{\Psi}_b & - Q_1 \Lambda & \star \\
\tilde{\Psi}_c & 0 & - Q_2 \Lambda 
\end{bmatrix} < 0 ,
\end{equation}
where
\begin{align}
\bar{\Psi}_a & := \begin{bmatrix} \bar{\psi}_{ij} \end{bmatrix}_{i,j=1,\ldots,7} , \quad 
\bar{\psi}_{ij} = \bar{\psi}_{ji},  \label{eqn:thm2-2} \\[2mm]
\bar{\psi}_{11} & \!:=\! - \frac{\Lambda G Q_2^{-1} G}{\e}, \  
\bar{\psi}_{22} \!:=\! - \frac{\Lambda G Q_1^{-1}G}{\e} , \  \bar{\psi}_{55} \!:=\! - \beta G G, \notag
\end{align}
with  $\bar{\psi}_{ij} = \tilde{\psi}_{ij}$ for the remaining block matrices and $\tilde{\psi}_{ij}$ as given in \eqref{eqn:thm3-not-2}. 

Next, by applying the Schur's complement to \eqref{eqn:thm2-1}, we obtain
\begin{equation} \label{eqn:thm2-3}
\bar{\Psi}_a + \bar{\Psi}_b^\top \Lambda Q_1^{-1} \bar{\Psi}_b + \bar{\Psi}_c^\top \Lambda Q_2^{-1} \bar{\Psi}_c < 0 ,
\end{equation}
where $\bar{\Psi}_b := \Lambda^{-1} \tilde{\Psi}_b$ and
$\bar{\Psi}_c := \Lambda^{-1} \tilde{\Psi}_c$. 

Now, let $P_1 := Q_1^{-1}$ and $P_2 := Q_2^{-1}$ and consider the following parametrizations:
\begin{equation} \label{eqn:thm2-4}
\begin{array}{c} 
L_p := G L_b G, \ S_p := P_3 S G, \ K_p := K_2 G, \\[1mm]
R_p := P_3 R, \quad P_p := G P_4 G. 
\end{array}
\end{equation}
From the above, one can cast \eqref{eqn:thm2-3} 
into
\begin{equation} \label{eqn:thm2-5}
    G_a \big( \breve{\Psi}_a + \bar{\Psi}_b^\top \Lambda P_1 \bar{\Psi}_b + \bar{\Psi}_c^\top \Lambda P_2 \bar{\Psi}_c \big) G_a < 0  ,
\end{equation}
where
\begin{align}
G_a & := \diag\{G,G,I,G,G,I,G\},   \notag \\
\breve{\Psi}_a & := \begin{bmatrix} \breve{\psi}_{ij} \end{bmatrix}_{i,j=1,\ldots,7}, \quad \breve{\psi}_{ij} = \breve{\psi}_{ji}, \label{eqn:thm2-6} \\
\breve{\psi}_{11} & := - \e^{-1} \Lambda P_2 , \quad 
\breve{\psi}_{22} := - \e^{-1} \Lambda P_1 , \quad 
\breve{\psi}_{32} := P_3 S , \notag \\ 
\breve{\psi}_{33} & := \He\{ P_3 R + \vartheta P_3 \} , \quad \breve{\psi}_{43} := D_v^\top S^\top P_3 , \notag \\
\breve{\psi}_{44} & := \He\{ P_4 M + \vartheta P_4\} , \quad \breve{\psi}_{54} := B_w^\top P_4 , \notag \\ 
\breve{\psi}_{55} & := - \beta I , \quad
\breve{\psi}_{62} := P_3  S , \quad 
\breve{\psi}_{63} := P_3 R , \notag \\ 
\breve{\psi}_{64} & := P_3 S D_v , \quad 
\breve{\psi}_{66} := - 2 P_3 , \quad 
\breve{\psi}_{71} := - L_b \Lambda , \notag \\ 
\breve{\psi}_{74} & := L_b D_v M , \  
\breve{\psi}_{75} := L_b D_v B_w , \ 
\breve{\psi}_{77}  := - \He\{L_b\} , \notag
\end{align}
with the remaining blocks of $\breve{\Psi}_a$ being matrices of zeros. 

Notice from \eqref{eqn:thm2-1} that $G$ is non-singular and therefore $\breve{\Psi}_a + \bar{\Psi}_b^\top \Lambda P_1 \bar{\Psi}_b + \bar{\Psi}_c^\top \Lambda P_2 \bar{\Psi}_c < 0$, which pre- and post-multiplied by $\phi^\top$ and $\phi$, respectively, leads to \eqref{eqn:proof-thm1-9}, with $\mu_1 = \mu_2 = 1$, in view of \eqref{eqn:Gammas}. The rest of this proof 
is straightforward under the proof of Theorem~\ref{thm:h1-iss}. \hfill $\Box$

\vskip 2mm

\section{Illustrative Examples} \label{sec:ie}

In the following, we provide two academic examples to illustrate the potentials of the proposed approach for stability analysis and boundary control design for hyperbolic PDEs in the H$_1$-sense.

\subsection{Stability Analysis}

Let the coupled PDE/ODE system be as defined in \eqref{eqn:pde-1}-\eqref{eqn:ode-2} with  
\begin{align}
\Lambda & = \begin{bmatrix}
               7  &  0  \\
               0  & 6
    \end{bmatrix}, \quad  H = \begin{bmatrix}
               0  &  -0.21  \\
               -1.15  & 0
    \end{bmatrix},  \notag \\
B_u & = \begin{bmatrix} 1 & 0 \\ 0 & 1 \end{bmatrix} , \quad B_{v} = \frac{1}{2} \begin{bmatrix} 
    1 \\ 1 \end{bmatrix} ,  \quad 
D_v = 10^{-2} \begin{bmatrix} 0 \\ 1 \end{bmatrix} , \notag \\
K_1 &= 10^{-3} \begin{bmatrix}
-0.1170 & 0.2198 \\ 0.2267 & -0.4266  
\end{bmatrix} , \label{eq:parametrosl7_exemplo1}
\end{align}
\begin{align}
K_2 & = \begin{bmatrix}
 0.0000 & 0.2100 \\ 1.1500 & 0.0000
\end{bmatrix} , \quad
R = \begin{bmatrix}
-3.5513 & 0.1664 \\ 0.1512 & -3.4908
\end{bmatrix} , \notag \\
S & = \begin{bmatrix}
0.0036 & -0.0144 \\ -0.0024 & 0.0152
\end{bmatrix},  \quad M   = - 4, \quad  B_{w} = 1 . \notag  
\end{align}

Then, we apply the following optimization problem to obtain an estimate of the system's ultimate-bound set
\begin{equation} \label{eqn:ex-opt}
\max_{P_1,\ldots,P_4} \ \min_i \ g_i :
\left\{ 
\begin{aligned}
& P_i - g_i I \geq 0, \ i=1,\ldots,4, \\[2mm]
& \qquad g_i > 0 \mbox{ \ and \eqref{eqn:thm1-LMI-2}.}
\end{aligned}
\right. 
\end{equation}

For $\beta = 1.98$ and $\alpha = 0.01$, we obtain
\begin{align}
P_1 & \!=\! \diag\{0.3599,0.3912\} , \   
P_2 \!=\! \diag\{0.4267,0.4546\} , \notag \\
P_3 & =  \begin{bmatrix} 0.5741 & 0.0152 \\ 0.0152 & 0.5800 \end{bmatrix}, \quad  
P_4 = 0.4144 \notag 
\end{align}
and
\begin{multline} \notag
\mcr = \{  (\xi,\partial_z \xi,v,\eta) \in \mcl_2^n \times \mcl_2^n \times \bbr^{n_v} \times \bbr^{n_\eta} : \\
\|(\xi,v,\eta)\|_X \leq 2.7485 \} .  
\end{multline}
Notice that \eqref{eqn:ex-opt} minimizes the size of $\mcr$ by maximizing $\epsilon_1$ as defined in \eqref{eqn:proof-thm1-0}. More precisely, we maximize the minimum of the smallest eigenvalues of $P_1,\ldots,P_4$ in order to obtain an optimized estimate of $\mcu_{\mathcal B}$. Fig.~\ref{fig:resultado_exemplo1} shows the $\|(\xi,\;\partial_{z}\xi,\; v,\;\eta)\|_{X}$ norm of the system and, as expected, the system's trajectory belongs to $\mcr$ as $t \rightarrow \infty$.
\begin{figure}
    \centering
    \includegraphics[width=0.8\linewidth]{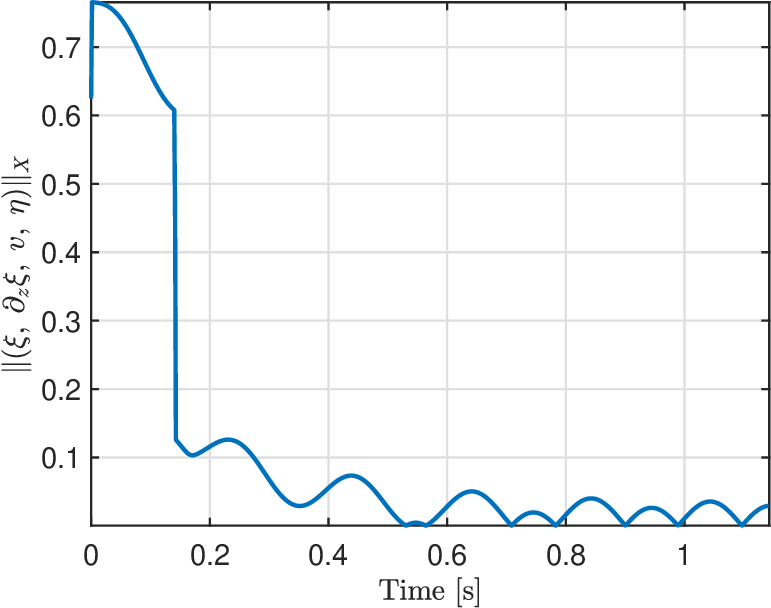}
    \caption{$\|(\xi,\;\partial_{z}\xi,\; v,\;\eta)\|_{X}$ of the system \eqref{eqn:pde-1}-\eqref{eqn:ode-2} with the parameters defined in \eqref{eq:parametrosl7_exemplo1}.}
    \label{fig:resultado_exemplo1}
\end{figure}

\subsection{Control Design}

Let the coupled PDE/ODE system as given in \eqref{eqn:pde-1}-\eqref{eqn:ode-2} be defined by  
\begin{align}
\Lambda  & = \begin{bmatrix}
               6  &  0  \\
               0  & 6
\end{bmatrix}, \quad  H = \begin{bmatrix}
               0     &  1  \\
               -1.1 & 0
\end{bmatrix},\label{eq:example2_paraml1} \\ 
B_{v} & = \frac{1}{10} \begin{bmatrix} 
    1 & 0 \\ 0 & 1 \end{bmatrix} ,  \ 
D_v = \begin{bmatrix} 0.1 & 0.0 \\ 0.0 & 0.1 \end{bmatrix} ,\\
 B_u &= B_w \!=\! \begin{bmatrix} 1 & 0 \\ 0 & 1 \end{bmatrix}
 M  = - \begin{bmatrix} 12  & 0 \\ 0 & 12  \end{bmatrix} \label{eq:example2_paraml3}. 
\end{align}

The above system is open-loop unstable, and we design a stabilizing controller that minimizes the resulting closed-loop $\mcu_{\mathcal B}$ by applying Theorem~\ref{thm:h1-iss-bcd} leading to
\begin{align}
 K_1 & \cong 0_{2} , \quad K_2 = \begin{bmatrix}
-0.0061 & -0.9959 \\ 1.0957 & -0.0064     
 \end{bmatrix} , 
\label{eq:example2_controll1} 
\end{align} 
\begin{align}
R & = \begin{bmatrix}
-1.9728 &  0.0000 \\ 0.0000 & -1.9728     
 \end{bmatrix} , \quad S \cong 0_n ,   \notag
\end{align}
where we have considered the following optimization problem
\begin{equation} \label{eqn:ex-opt-2}
\min_{Q_1,Q_2,P_3,P_p} \max_i g_i :
\left\{ \!
\begin{aligned}
& g_i I_n - Q_i \geq 0, ~i=1,2, \\[2mm]
& \begin{bmatrix} P_3 & I_n \\ I_n & - g_3 I_n \end{bmatrix} \geq 0, \\[2mm]
& \begin{bmatrix}
    P_p & G \\ G & g_4 I_n
\end{bmatrix} \geq 0 \\[2mm]
& \ g_j > 0, ~j=1,\ldots,4, \\[2mm]
& \mbox{ \ and \eqref{eqn:thm1-LMI-2-bcd}.}
\end{aligned}
\right. 
\end{equation}
with $\beta = 1.98$ and $\alpha = 0.11$. The resulting $\mcu_{\mathcal B}$ estimate is given by
\begin{multline} \notag
\mcr = \{  (\xi,\partial_z \xi,v,\eta) \in \mcl_2^n \times \mcl_2^n \times \bbr^{n_v} \times \bbr^{n_\eta} : \\ \|(\xi,v,\eta)\|_X \leq 1.5281 \} .  
\end{multline}
The above optimization problem maximizes $\epsilon_1$ as defined in \eqref{eqn:proof-thm1-0} by minimizing the largest eigenvalue among $P_1^{-1},\ldots,P_4^{-1}$ in order to obtain an optimized estimate of the system's ultimate-bound set. For illustrative purposes, Fig.~\ref{fig:resultado_exemplo2} depicts the H$_1$-norm of the system in closed loop. Notice that $\|(\xi,\;\partial_{z}\xi,\; v,\;\eta)\|_{X}\leq 0.04$, which shows that the steady state trajectory of the closed-loop system belongs to $\mcr$.
\begin{figure}
    \centering
    \includegraphics[width=0.8\linewidth]{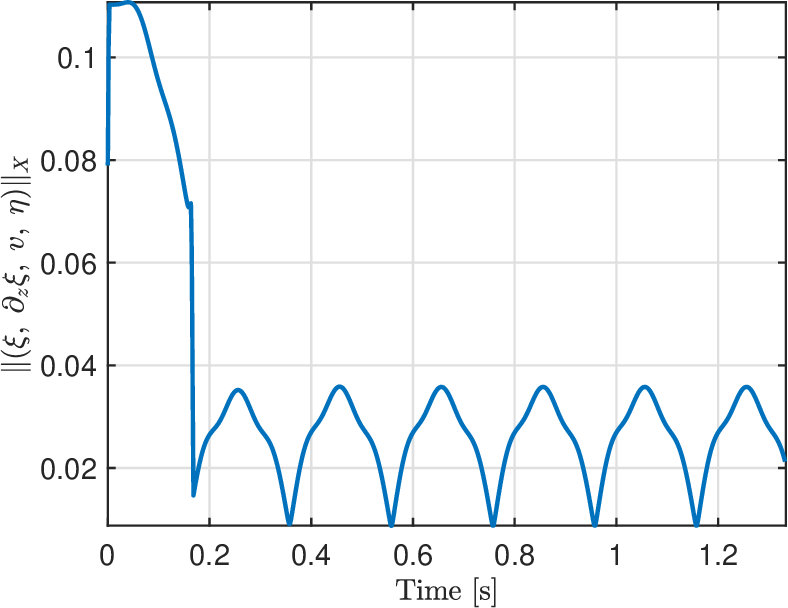}
    \caption{$\|(\xi,\;\partial_{z}\xi,\; v,\;\eta)\|_{X}$ of the system \eqref{eqn:pde-1}-\eqref{eqn:ode-2} with the parameters defined in \eqref{eq:example2_controll1}.}
    \label{fig:resultado_exemplo2}
\end{figure}

\section{Concluding Remarks} \label{sec:cr}

This paper investigated input-to-state stability and stabilization problems in the H$_1$-sense for linear systems governed by coupled ordinary differential equations and hyperbolic partial differential equations, subject to boundary actuation, sensing and disturbance inputs. First, an existing result from the literature is extended to ensure ISS under magnitude-bounded disturbances within the H$_1$-framework, relaxing restrictive assumptions on disturbance inputs. Then, stability analysis and control design conditions are formulated as linear matrix inequality constraints which ensure the ISS of the closed-loop system while providing an estimate of the system's ultimate-bound set. Numerical examples illustrated the potentials of the proposed methodology. Future research will be concentrated on dealing with non-homogeneous Semilinear Hyperbolic Systems. 

\bibliography{sample}

\end{document}